\newcommand\SmallFactor{10}
\newcommand\BigFactor{4100}
\begin{document}

\title{Lifting $L$-polynomials of genus $2$ curves}
\author{Jia Shi}
\date{\vspace{-5ex}}

\maketitle

\begin{abstract}

Let $C$ be a genus $2$ curve over $\Q$. Harvey and Sutherland's 
implementation of Harvey's average polynomial-time algorithm computes 
the  $\bmod \ p$ reduction of the numerator of the zeta function of $C$ 
at all good primes $p\leq B$ in $O(B\log^{3+o(1)}B)$ time, which is 
$O(\log^{4+o(1)} p)$ time on average per prime.
Alternatively, their algorithm can do this for a single good prime 
$p$ in $O(p^{1/2}\log^{1+o(1)}p)$ time. While Harvey's algorithm 
can also be used to compute the full zeta function, no practical implementation 
of this step currently exists.

In this article, we present an $O(\log^{2+o(1)}p)$ Las Vegas algorithm that 
takes the $\bmod \ p$ output of Harvey and Sutherland's implementation and 
outputs the full zeta function. We then benchmark our results against the fastest
algorithms currently available for computing the full zeta function of a genus~$2$ curve, 
finding substantial speedups in both the average polynomial-time
and single prime settings.

\end{abstract}
\section{Introduction}

Let $C$ be a genus $2$ curve over $\Q$, which we may assume is defined by a (not necessarily minimal) equation $y^2=f(x)$, where $f$ is a squarefree polynomial of degree 5 or 6 with integer coefficients.  The $L$-function of $C$ is defined by an Euler product
\[
L(C,s) := \prod_p L_p(p^{-s})^{-1} = \sum_{n\ge 1} a_n n^{-s}
\]
where $L_p(T)$ is an integer polynomial of degree at most 4.  The $L$-function $L(C,s)$ is the subject of many open conjectures in arithmetic geometry, including the paramodular conjecture, and generalizations of the Sato-Tate conjecture, the conjecture of Birch and Swinnerton-Dyer, and the Riemann hypothesis.

To investigate these conjectures, and in some cases prove instances of them, one must explicitly compute the polynomials $L_p(T)$ for all primes $p$ up to a suitable bound $B$ (one can typically take $B = O(\sqrt{N})$, where $N$ is the conductor of the Jacobian of $C$).

For odd primes that do not divide the discriminant of $f$ (all but finitely many), the polynomial $L_p(T)$ appears as the numerator of the zeta function of the genus 2 curve $C_p/\F_p$ obtained by reducing the coefficients of $f$ modulo $p$, and can be computed by counting points on $C_p$ over $\F_p$ and $\F_{p^2}$ (we consider more efficient methods below). The computation of $L_p(T)$ at odd primes of bad reduction is efficiently handled by an algorithm of Liu \cite{Liu_1994} that has been implemented in Pari/GP \cite{PARI2}.  The case $p=2$ is more difficult, and while there are algorithms to compute $L_2(T)$ that work in principle (see \cite{DD_2019}, for example), in practice these algorithms are difficult to implement efficiently.  If one is willing to assume that $L(C,s)$ satisfies its expected functional equation (as implied by the Hasse--Weil conjecture), it can be more practical to deduce $L_2(C,s)$ from the functional equation using the knowledge of $L_p(T)$ at sufficiently many odd primes $p$; see \cite[\S 5.2]{BSSVY_2016} for an explanation of this method.

The average polynomial-time algorithm of Harvey \cite{Harvey_2014, Harvey_2015} provides a method to compute $L_p(T)$ for all odd primes $p\le B$ of good reduction for $C$ in $O(B\log^{3+o(1)}B)$ time, which is $O(\log^{4+o(1)}p)$ time on average per prime $p$.  This is notably faster than any algorithm known for computing $L_p(T)$ directly, including the most efficient implementations \cite{AGS_2019} of Pila's generalization \cite{Pila_1990} of Schoof's algorithm \cite{Schoof_1985} in genus~2 (indeed, this is faster than Schoof's algorithm).

The first step of Harvey's algorithm computes $L_p(T) \bmod p$, 
which for the purpose of computing $L(C,s)$ is almost sufficient; 
one only needs $a_n$ for $n\le B$, and these $a_n$ are determined by $a_p$ for $p\le B$ 
and $a_{p^2}$ for $p^2\le B$.  For $p>64$ the value $L_p(T)\bmod p$ uniquely determines 
$a_p\in \Z$, and for $p<64$ one can use a point counting algorithm that takes negligible time.

The main contribution of this paper is the following result:

\begin{theorem} \label{thm:main-algorithm}
Let $C/\Q$ be a genus $2$ curve and let $p$ be an odd prime of good reduction for $C$. 
Given the reduction $C_p/\F_p$ and $L_p(T)\bmod p$, 
there exists a probabilistic algorithm that computes the $L$-polynomial $L_p(T) \in \Z[T]$ 
in $O(\log^{2+o(1)} p)$ expected time.
\end{theorem}

This algorithm is easy to implement and eliminates the
need to resort to a separate $p$-adic algorithm that computes $L_p(T)$ from
scratch.  
When combined with the state-of-the-art implementations of Harvey's algorithm for hyperelliptic curves described in \cite{HS_2016,Sutherland_2020}, 
this yields an algorithm to compute $L_p(T)$ for all odd good primes $p\le B$ that runs in $O(B\log^3 B)$ time,
and an algorithm to compute $L_p(T)$ for any particular prime $p$ that runs in $O(p^{1/2}\log^2 p\log\log p)$. 
Both complexity bounds improve the constant and logarithmic factors that arise in other methods that achieve roughly the same complexity.

While both approaches have an asymptotic runtime of $\tilde{O}(p^{1/2})$ for computing a single instance of $L_p(T)$, in practice,  our 
algorithm combined with an implementation of \cite{HS_2016,Sutherland_2020} achieves a speedup factor of $\SmallFactor$ over \cite{ABCMT_2019} (see Table \ref{Table-exp2}).
When computing $L_p(T)$ for all the odd primes of good reduction 
up to a bound $B$, where our algorithm has an
asymptotic advantage, the difference is more dramatic: we achieve a
speedup factor of more than $500$ already at $B=2^{13}$ and more than $1000$ for
$B=2^{21}$.

\section*{Acknowledgments}
I am very grateful to my advisor, Andrew V. Sutherland, for suggesting the problem and for his guidance throughout this project.

\section{Outline of the algorithm}

We first outline the structure of the algorithm. Throughout this paper, we assume that $C/\F_p$ is a genus~$2$ curve 
of the form $y^2=f(x)$ with $f\in \F_p[x]$ a squarefree polynomial of 
degree $5$ or $6$. In the average polynomial-time setting,
this will be the reduction of a genus $2$ curve over $\Q$ 
with good reduction at $p$.

Our algorithm takes $C$ and $L_p(T)\bmod p$ as inputs.
More precisely, the $L$-polynomial is of the form 
\[L_p(T)=p^2T^4+pa_1T^3+a_2T^2+a_1T+1 \t{ where } a_1,a_2\in\Z,\] 
and we already know the values of $a_1\bmod p$, $a_2\bmod p$. 

Let $J$ denote $\Jac(C)$, the Jacobian of $C$, and 
$\tilde{J}$ denote $\Jac(\tilde{C})$, the Jacobian of the quadratic twist of $C$.

Throughout our algorithm, we use the following important fact:
\[\#J(\F_p) = L_p(1) \text{ and } \#\tilde{J}(\F_p) = L_p(-1).\]
A proof of this fact 
can be found in Lemma 3 of \cite{Sutherland_2009}.

We furthermore assume that $p>64$, so that 
there is only one possibility for $a_1$ within the Weil bounds
\[-4\sqrt{p}\leq a_1\leq 4\sqrt{p}.\]
For $p>64$, the interval 
has width $(4\sqrt{p})-(-4\sqrt{p})=8\sqrt{p}<p$. Hence 
the value of $a_1\bmod p$ uniquely determines $a_1$. When $p<64$, we can use point counting to determine $a_1 = \#C(\F_p)-p-1$,
which is also the negation of the trace of Frobenius.

It remains to compute $a_2$. The algorithm to determine $a_2$ can be summarized in three steps. 
\begin{itemize}
    \item Step 1. 
    
    Proposition 4 in \cite{Kedlaya_2008}
    determines bounds for $a_2$ given $p$ and $a_1$.
    Define $b = \frac{a_1}{\sqrt{p}}$ and $\delta = \abs{b-4\floor{\frac{b}{4}} - 2}$.
    Then, the lower bound and the upper bound are given by 
    \begin{equation}
    B_{\text{low}}={\frac12(a_1^2-p\delta^2)},\text{ and } B_{\text{high}}=2p+\frac14 a_1^2
    \label{eqn:ksbounds}
    \end{equation}
    respectively; specifically, $B_{\text{low}} \le a_2 \le B_{\text{high}}$. We refer to these as the KS bounds. 
     Knowing
    $a_2\bmod p$ and the KS bounds enables us to produce 
    a finite list of candidates for the integer $a_2$. 
    
    For the purpose 
    of analysis, we denote these candidates as the \textit{initial candidates}.
    In Lemma \ref{InitialCandidatesLemma}, we will 
    prove that this list contains at most $5$ 
    elements. 
    
    By efficiently eliminating all the wrong 
    possibilities, the rest of the algorithm determines $a_2$, which in turn determines $L_p(T)$.
    
    \item Step 2. 
    
    Since $\#J(\F_p)=L_p(1)=p^2+pa_1+a_2+a_1+1$, we can use 
    the 2-rank of $J(\F_p)$ to eliminate 
    the candidates yielding incompatible 2-ranks. The 2-rank here refers to the
    rank of the 2-torsion subgroup of $J(\F_p)$.

    To do this, we define $L_{p,a_1,c}(T) = p^2T^4 + pa_1T^3 + cT^2 + a_1T + 1$,
    where $c$ represents different candidate values for $a_2$. We 
    eliminate any candidate $c$ such that $L_{p,a_1,c}(1)$
    is incompatible with the 2-rank.

    When the 2-rank is $1$, we know more information than just the parity of $\#J(\F_p)$.
    In fact, we can easily 
    determine $\#J(\F_p)\bmod 4$, allowing us 
    to eliminate all but one candidate. This will be explained in Section \ref{Section-step2}.

    For the purpose of analysis, we denote the candidates left after 
    completing this step 
    as the \textit{refined candidates}. If
    there is exactly one element in \textit{refined candidates}, then 
    we do not need to perform Step~3.

    \item Step 3.

    In this step, we generate a random point $P\in J(\F_p)$. Then, for each of the remaining candidates $c$ for $a_2$, we compute 
     $L_{p,a_1,c}(1)$ and test whether $L_{p,a_1,c}(1)\cdot P=0$
    in $J(\F_p) $, where $n \cdot P$ denotes the group computation
     $\underbrace{P+\ldots+P}_{n\t{ times}}$
    in $J(\F_p) $ and $0$ denotes the identity element. If $L_{p,a_1,c}(1)\cdot P\neq 0$,
     then we eliminate the candidate.

    We continue by alternating the above procedure 
    between $J(\F_p)$ and $\tilde{J}(\F_p)$, using  $L_{p,a_1,c}(1)$ 
    and $L_{p,a_1,c}(-1)$. We terminate this process 
    when only one candidate remains.

    We will prove that this process is expected to terminate within $4$ rounds
    in Section \ref{Section-step3}.

\end{itemize}

The rest of this paper is structured as follows: We quickly 
prove a result about Step 1 in Section \ref{Section-step1}.
We will describe Step 2 in detail in Section \ref{Section-step2}
and provide a proof of correctness. We then 
describe Step 3 in detail in Section \ref{Section-step3}
and give a proof of correctness. In Section \ref{Section-pseudocode}, 
we provide the pseudocode and the complexity 
analysis for the entire algorithm. We present 
benchmark results and timings in Section \ref{Section-implementation}.

\section{Initial candidates using KS bounds}\label{Section-step1}

Recall that \textit{initial candidates} denote the candidates of $a_2$
that fall within the KS bounds (see Equation \ref{eqn:ksbounds}). Let $B_{\text{low}}, B_{\text{high}}$
denote the lower KS bound and the upper KS bound, respectively. 
 We next show that there are at most $5$ elements 
in \textit{initial candidates}.

\begin{lemma} \label{InitialCandidatesLemma}
    Considering Equation \ref{eqn:ksbounds}, only one of the two following cases can happen:
    \begin{enumerate}
        \item There are at most $4$ elements in  \textit{initial candidates}  $=\set{c\in \Z: B_{\text{low}}\leq c\leq B_{\text{high}}, c\equiv  a_2\bmod p}$
        \item There are exactly $5$ elements in  \textit{initial candidates}, which are equal to $\set{-2p,-p,0,p,2p}$. 
        In this case, $a_1=0$.
    \end{enumerate}
\end{lemma}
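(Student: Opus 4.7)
The plan is to bound the size of \emph{initial candidates} by counting elements of an arithmetic progression with common difference $p$ lying in the closed interval $[B_{\text{low}}, B_{\text{high}}]$. Any such progression contains at most $\lfloor W/p\rfloor + 1$ points, where $W := B_{\text{high}} - B_{\text{low}}$. Substituting from (\ref{eqn:ksbounds}) gives
\[
W \;=\; 2p + \tfrac{1}{2}p\delta^{2} - \tfrac{1}{4}a_{1}^{2},
\]
so I would aim to show that $W < 4p$ whenever $a_1 \neq 0$, and handle the boundary case $a_1 = 0$ separately.

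First I would pin down when $\delta$ attains its maximum value $2$. Writing $b = a_1/\sqrt p \in [-4, 4]$ and using that the ``real mod $4$'' expression $b - 4\lfloor b/4\rfloor$ lies in $[0, 4)$, one sees that $\delta \in [0, 2]$, with $\delta = 2$ iff $b$ is an integer multiple of $4$. Since $p$ is prime, $\sqrt p$ is irrational, so $b$ is irrational whenever $a_1 \neq 0$; combined with $|b| \leq 4$ and the impossibility of $b = \pm 4$ (which would force $a_1 = \pm 4\sqrt p \notin \Z$), this yields $\delta = 2 \iff a_1 = 0$. For $a_1 \neq 0$ I would then verify the strict inequality $W < 4p$ by a short case split on the sign of $b$: substituting $\delta = |b - 2|$ on $[0, 4)$ and $\delta = |b + 2|$ on $[-4, 0)$, the inequality reduces in both cases to an elementary quadratic in $b$ that is strictly satisfied on $(-4, 4) \setminus \{0\}$. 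This forces $\lfloor W/p\rfloor + 1 \leq 4$, establishing Case 1.

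For the remaining possibility $a_1 = 0$, the interval becomes $[-2p, 2p]$ of width exactly $4p$. I would count the integers $\equiv a_2 \pmod p$ in this closed interval directly: there are $5$ precisely when both endpoints $\pm 2p$ belong to the progression, which happens iff $a_2 \equiv 0 \pmod p$, producing the explicit set $\{-2p, -p, 0, p, 2p\}$; otherwise the count is exactly $4$, placing us back in Case 1. This completes Case 2. The main (if minor) obstacle is the case analysis that shows $W < 4p$ for $a_1 \neq 0$: it is just careful bookkeeping around the floor function and the asymptotics of the bound as $|b| \to 4$, where a naive estimate $\delta \leq 2$ is too weak and one really must exploit the interaction between $\delta^{2}$ and $a_{1}^{2}/p = b^{2}$.
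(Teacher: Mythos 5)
Your proof is correct and follows essentially the same route as the paper: compute the interval width $B_{\text{high}}-B_{\text{low}} = 2p - \tfrac14 a_1^2 + \tfrac12 p\delta^2$, bound it by $4p$ with equality only when $a_1=0$, and count the five-element case $a_1=0$, $a_2\equiv 0\bmod p$ separately. One remark: the concluding case split on the sign of $b$ is unnecessary, since the crude bound $\delta\le 2$ already gives width at most $4p-\tfrac14 a_1^2<4p$ whenever $a_1\neq 0$, so the ``interaction between $\delta^2$ and $b^2$'' you flag as the main obstacle never actually needs to be exploited.
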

\begin{proof}
    We calculate the width of the interval in which $a_2$ lies:
    \begin{align*}
        B_{\text{high}}-B_{\text{low}} &= 2p+\frac14 a_1^2 - \frac12(a_1^2-p\delta^2)\\
        &=2p-\frac14 a_1^2 + \frac12 p\delta^2\\
        &\leq 2p-\frac14 a_1^2 + 2p\leq 4p.
    \end{align*}

    Note that equality occurs only if $a_1=0$, which yields $B_{\text{high}}=2p$ and $B_{\text{low}}= -2p$. In this case, 
    if $a_2 \equiv 0\bmod p$ then we obtain $\set{-2p,-p,0,p,2p}$. In all the other cases,
    there are at most $4$ integers in the interval $[B_{\text{low}},B_{\text{high}}]$ that are equivalent to $a_2\bmod p$.
\end{proof}

\section{Eliminate wrong candidates using 2-rank} \label{Section-step2}
 
We define the \textit{factorization pattern} of a separable 
polynomial $g(x)\in \F_p[x]$ to be the multiset of 
the degrees of its irreducible factors.

To compute the factorization pattern, we do not need to completely
factor $g(x)$. We can compute the factorization pattern in $O(\log^2 p\log\log p)$ time using 
the distinct-degree factorization 
algorithm outlined in Chapter 14 of \cite{von_zur_Gathen_Gerhard_2013}.

We can determine the 2-rank of $J(\F_p) $ by identifying the 
factorization pattern of $f(x)$ in Table \ref{2rank-chart}. For a proof, see Lemma 
4.3 and Lemma 5.6 in \cite{MichaelStoll2001}.

\begin{table}
\begin{center}
    \begin{tabular}{ |c|c| } 
    \hline
     Factorization Pattern & 2-rank\\
    \hline
    $(5)$& $0$\\
    $(2,3)$& $1$ \\
    $(4,1)$ & $1$ \\
    $(3,1,1)$ &$2$ \\
    $(2,2,1)$ & $2$\\
    $(2,1,1,1)$ & $3$\\
    $(1,1,1,1,1)$ & $4$\\
    \hline 
    $(6)$ & $0$\\
    $(4,2)$ & $1$\\
    $(3,3)$ & $0$ \\
    $(2,2,2)$ & $2$\\
    \hline
    \end{tabular}{}
\end{center}
\caption{2-rank based on factorization pattern}\label{2rank-chart}
\end{table}

Note that we omitted factorization patterns corresponding to a degree 6 polynomial with at least one linear factor,
as such polynomials always correspond to a degree 5 model. In our algorithm, we always convert such 
cases to a degree 5 model to leverage specific optimizations.

\subsection{The algorithm} \label{subsection-2rank-algo}

Using Table \ref{2rank-chart}, we compute the 2-rank of the Jacobian and 
eliminate wrong candidates as follows: 
\begin{itemize}
    \item Case 1: If the 2-rank of $J(\F_p)$ is $0$, we only keep the candidates 
    that yield an odd Jacobian size.
    \item Case 2: If the 2-rank of  $J(\F_p)$ is $1$, 
    we only keep the candidates yielding an even Jacobian size.
    Applying Proposition \ref{prop:rank1} 
    allows us to determine whether the Jacobian size is $0\bmod 4$ or $2\bmod 4$
    and only keep the corresponding candidates.
    \item Case 3: If $r$, the 2-rank of  $J(\F_p)$, is at least $2$,
    we only keep the candidates that yield a Jacobian size divisible by $2^r$.
\end{itemize}
Recall that the candidates remaining after this elimination
process are called \textit{refined candidates}.
Combining the three above cases with Lemma \ref{InitialCandidatesLemma},
we observe that there are at most $2$ elements left in \textit{refined candidates}.
Of course, if there is only $1$ element left, we return immediately.

\subsection{Optimization when the 2-rank equals $1$}

\begin{proposition} \label{prop:rank1}
    Let $p>2$ be a prime.
    Let $C\colon y^2=f(x)$
    be a genus $2$ curve over $\F_p$ where $f$ is squarefree with 
    the 2-rank of $\Jac(C)(\F_p)$ equal to $1$. We also assume 
    that if $\deg(f)=6$, it does not have any linear factors. This can 
    be achieved by a change of variables. 
    Therefore, we can always write $f = ug_1g_2$, where $u\in\F_p^\times$, $g_1$ and $g_2$ 
    are both monic and irreducible. Then 
    \[\#J(\F_p) \equiv\begin{cases}
         0 \bmod 4 &\t{ if } \Res(g_1,g_2)\in (\F_p^{\times})^2\\
         2 \bmod 4 &\t{ otherwise. }
   \end{cases}\]
\end{proposition}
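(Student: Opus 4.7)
The plan is to translate the condition $\#J(\F_p)\equiv 0\pmod 4$ into whether the unique nontrivial 2-torsion class $T\in J(\F_p)$ is divisible by $2$ in $J(\F_p)$, then, via the Mumford representation, reduce that divisibility to a square-root condition in $\F_{p^2}$, which the norm map finally converts into the resultant criterion.

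First I would observe that since $J(\F_p)[2]\cong\Z/2\Z$, the 2-Sylow subgroup of $J(\F_p)$ is cyclic, so $4\mid\#J(\F_p)$ if and only if $J(\F_p)$ contains an element of order~$4$, which happens if and only if $T\in 2J(\F_p)$. I would then identify $T$ as the class of $W_{\alpha_1}+W_{\alpha_2}-D_\infty$, where $\alpha_1,\alpha_2\in\overline{\F_p}$ are the roots of $g_1$, $W_{\alpha_i}=(\alpha_i,0)$ are the corresponding Weierstrass points, and $D_\infty$ is the rational degree-$2$ divisor at infinity (equal to $2\infty$ when $\deg f=5$ and to $\infty_++\infty_-$ when $\deg f=6$). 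Looking for $D\in J(\F_p)$ with $2D=T$ in its Mumford representation $(u_D,v_D)$ is then equivalent to finding a function $h\in\F_p(C)^\times$ with $(h)=2D-D_T$.

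The Riemann--Roch space $L(W_{\alpha_1}+W_{\alpha_2}+D_\infty)$ is three-dimensional with basis $\{1,\,x,\,y/g_1(x)\}$, so any candidate $h$ has the form $h=(g_1(x)\,A'(x)+y)/g_1(x)$ for some $A'\in\F_p[x]$ of degree at most $1$. Computing the norm $h\cdot\iota^*h=(g_1^2A'^2-f)/g_1^2$ and matching its divisor on $C$ against $2(u_D)-(g_1)$ yields, using $f=u\,g_1g_2$, the polynomial identity
\[g_1(x)\,A'(x)^2 \;-\; e\,u_D(x)^2 \;=\; u\,g_2(x)\]
for some $e\in\F_p^\times$. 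Reducing this identity modulo $g_1(x)$ and letting $\theta$ denote the image of $x$ in $\F_p[x]/(g_1)\cong\F_{p^2}$, the condition becomes $u_D(\theta)^2=-u\,g_2(\theta)/e$; since every element of $\F_p^\times$ is already a square in $\F_{p^2}^\times$, such a solution $u_D(\theta)\in\F_{p^2}$ exists if and only if $g_2(\theta)\in(\F_{p^2}^\times)^2$.

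The main obstacle will be the sufficiency direction: showing that an $\F_{p^2}$-square root of $g_2(\theta)$ actually lifts to $\F_p$-rational polynomial data $(A',u_D,e)$ solving the full polynomial identity. I would handle this either by directly matching the four nontrivial coefficient equations at $x^0,\ldots,x^3$ against the four free parameters in $(A',u_D)$, or more conceptually by invoking the Hasse principle for the ternary quadratic form $g_1(x)Y^2-e\,Z^2-u\,g_2(x)W^2$ over $\F_p(x)$, noting that the extension $\F_p(x)(\sqrt{g_1})/\F_p(x)$ is ramified only at the place corresponding to $g_1$, so the mod-$g_1$ condition is the only local obstruction. Once sufficiency is in hand, the classical fact that $\alpha\in(\F_{p^2}^\times)^2$ if and only if $N_{\F_{p^2}/\F_p}(\alpha)\in(\F_p^\times)^2$, together with the identity $N_{\F_{p^2}/\F_p}(g_2(\theta)) = g_2(\theta)\,g_2(\theta^p) = \Res(g_1,g_2)$ (valid since $g_1$ is monic), immediately produces the stated criterion.
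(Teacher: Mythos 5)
Your overall route is genuinely different from the paper's: the paper reduces, exactly as you do, to deciding whether the unique nontrivial $2$-torsion class $P$ lies in $2J(\F_p)$, but then it invokes Stoll's injective $2$-descent homomorphism $\delta\colon J(\F_p)/2J(\F_p)\to A^\times/A^{\times 2}$ with $A=\F_p[x]/(f)$, together with his explicit formula for $\delta(P)$, and finishes with the finite-field norm computation $N(g_2(\alpha_1))=\Res(g_1,g_2)$. Your plan instead tries to redo the descent by hand via Mumford representations, and it has two genuine gaps. First, your setup silently assumes one of the irreducible factors is quadratic: you take $\alpha_1,\alpha_2$ to be ``the roots of $g_1$'', use the basis $\{1,x,y/g_1(x)\}$ of $L(W_{\alpha_1}+W_{\alpha_2}+D_\infty)$, and identify $\F_p[x]/(g_1)\cong\F_{p^2}$. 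But $2$-rank $1$ also occurs for the factorization pattern $(4,1)$ (degree-$5$ $f$ equal to an irreducible quartic times a linear factor), where neither factor has degree $2$; there the unique rational $2$-torsion class is $[(r,0)-\infty]$, involving the Weierstrass point at infinity, and your divisor identification, Riemann--Roch space, the identity $g_1A'^2-e\,u_D^2=u\,g_2$, and the $\F_{p^2}$-norm step all break down (the relevant \'etale algebra is $\F_{p^4}\times\F_p$). The paper's use of Stoll's formula treats the odd- and even-degree patterns uniformly; your argument as written covers only $(2,3)$ and $(4,2)$.

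Second, and more seriously, the sufficiency direction --- that $g_2(\theta)$ being a square in $\F_{p^2}$ forces an $\F_p$-rational triple $(A',u_D,e)$ satisfying $g_1A'^2-e\,u_D^2=u\,g_2$ with the Mumford degree bounds, hence an actual halving of $P$ --- is the crux, and you only sketch it. Matching four coefficient equations against four parameters is not a proof (the system is quadratic, not linear), and the Hasse-principle alternative is not correctly set up: since $e$ is itself an unknown you are not dealing with a single conic over $\F_p(x)$, and you have not examined the places dividing $g_2$ or the place at infinity, where the degree bounds on $A'$ and $u_D$ impose a further local condition. This implication is exactly what the paper imports from Stoll: injectivity of $\delta$ plus the explicit image of $P$ make ``$\delta(P)\in A^{\times 2}$'' equivalent to, not merely necessary for, $P\in 2J(\F_p)$. (Minor additional loose ends: the degenerate cases where $h$ lies in $\F_p[x]$, i.e.\ your coefficient of $y/g_1$ vanishes, and where $\deg u_D<2$, are not treated.) As it stands, your proposal establishes only the necessity direction, and only in two of the three factorization patterns, so the stated criterion is not yet proved.
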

\begin{remark}
   Note that when the $2$-rank is $1$, the factorization pattern is $(1,4), (2,3)$ or $(2,4)$, with
   \[\Res(g_1,g_2)=(-1)^{\deg(g_1)\deg(g_2)}\Res(g_2,g_1)=\Res(g_2,g_1).\] 
   Hence the order in which we assign $g_1$ and $g_2$ is irrelevant.
\end{remark}

\begin{proof}
    We can assume that $f$ is monic after a change of variables
    and possibly replacing $C$ by its quadratic twist. This is allowed since
    \[4\mid 2a_1(p+1) = \#J(\F_p) - \#\tilde{J}(\F_p),\]
    so the residue modulo $4$  is the same 
    whether we are working with $C$ or its twist.

    Since the 2-rank is $1$, we know that the 2-Sylow subgroup of $J(\F_p)$ is isomorphic 
    to  $\Z/2^q\Z$ for some $q\geq 1$.

    Let $P$ be the generator of $J(\F_p)[2]$, or equivalently the unique nonzero element 
    such that  $2\cdot P=0$. $P$'s image in the 2-Sylow subgroup corresponds 
    to the element $2^{q-1}$ in  $\Z/2^q\Z$. If the 2-Sylow subgroup is isomorphic to 
    $\Z/2\Z$, then there does not exist an element $Q\in J(\F_p)$ such that 
    $2Q=P$. On the other hand, if $q\geq2$, there always exists an 
    element  $Q\in J(\F_p)$ where $2Q=P$. The former case corresponds to 
    the situation where $\# J(\F_p)\equiv 2 \bmod 4$, and the latter corresponds 
    to the situation where $\# J(\F_p)\equiv 0 \bmod 4$. Hence $\# J(\F_p)\equiv 0 \bmod 4$ if and only if $P\in 2J(\F_p)$.

    Let $A=\F_p[x]/f(x)$ be the étale algebra  defined by $f(x)$. In the case when 2-rank is $1$, by Table \ref{2rank-chart}, 
     $f(x)$ is the product of two irreducible polynomials in $\F_p[x]$. 
    
    We write $f(x)=g_1(x)g_2(x)$, where $g_1(x), g_2(x)$ are both monic and irreducible. $A$ can be written as a product of two finite field extensions of $\F_p$:
    \[A\simeq \underbrace{\frac{\F_p[x]}{g_1(x)}}_{:=A_1}\times  \underbrace{\frac{\F_p[x]}{g_2(x)}}_{:=A_2}. \]

    In \cite{MichaelStoll2001}, Stoll  presented an injective homomorphism 
    \[\delta: J(\F_p)/2J(\F_p)\to A^\times/A^{\times2},\]
    along with an explicit method to compute the image of the generator.
    If $P$ is the generator of $J(\F_p)[2]$, and $\theta$ is the image of $x$ in $A^\times$, then the image is given by

    \begin{equation}\label{eq:delta-formula}\delta(P) = \begin{cases}
        (-1)^{\deg(g_1)}g_1(\theta) + (-1)^{\deg(g_2)} g_2(\theta) \bmod A^{\times2} & \t{if }f \t{ has odd degree} \\
        g_1(\theta) - g_2(\theta)  \bmod A^{\times2} & \t{if }f \t{ has even degree.}
    \end{cases}
    \end{equation}

    Therefore, $P\in 2J(\F_p)$ if and only if $P\in\ker(\delta)$, which occurs 
    if and only if $\delta(P)\in A^{\times2}$. Hence this amounts to computing whether $\delta(P)$
    is a square in $A^\times$. This is equivalent to identifying whether 
    its image in $A_1$ and $A_2$ are both squares. We further use the fact that in finite fields, 
    an element is a square if and only if its norm is.
    
    Without loss of generality, in the case 
    where the factorization pattern is $(1,4)$ or $(2,3)$, we can assign $g_1$
    to be the even-degree factor and $g_2$ to be the odd-degree factor. 
    When the factorization pattern is $(2,4)$, we assign $g_1$ to be 
    the degree $2$ factor and $g_2$ to be the degree $4$ factor.

    By \eqref{eq:delta-formula}, 
        \[\delta(P)=g_1(\theta)-g_2(\theta)\in (A_1\times A_2)/(A_1^2\times A_2^2).\]
        Let $\alpha_1,\alpha_2$ be the image of $x$ in $A_1$ and $A_2$, respectively. The 
        above is equal to 
        \[(-g_2(\alpha_1), g_1(\alpha_2))\in (A_1\times A_2)/(A_1^2\times A_2^2).\]
        Computing the norm yields:
        \begin{align*}
            N_{A_1/\F_p} (g_2(\alpha_1)) &= \prod_{\sigma\in \Gal(A_1/\F_p)}\sigma(g_2(\alpha_1))\\
            &= \prod_{\sigma\in \Gal(A_1/\F_p)}g_2(\sigma(\alpha_1))\\
            &=\prod_{\beta\in \t{roots}(g_1)}g_2(\beta)\\
            &=\Res(g_1,g_2).
        \end{align*}

       Since $A_1\simeq \F_p[x]/g_1(x)$ and $g_1$ has even degree,
        $A_1\simeq \F_{p^n}$ where $n$ is even. So 
        \[N(-1)=(-1)^{\frac{p^{n}-1}{p-1}}=(-1)^{p^{n-1}+p^{n-2}+\ldots+p+1} = 1.\]
        Hence $-g_2(\alpha_1)$ is a square in $A_1$ $\iff$ $g_2(\alpha_1)$ is a square in $A_1$ $\iff$ $\Res(g_1,g_2)$ is a square in $\F_p$. 
        
        On the other hand, 
        \[N_{A_2/\F_p} (g_1(\alpha_2))= \Res(g_2,g_1)=(-1)^{(\deg(g_1))(\deg(g_2))}\Res(g_1,g_2)=\Res(g_1,g_2).\]

        Therefore, it suffices to check whether $\Res(g_1,g_2)$ is a square in $\F_p$.
\end{proof}

\begin{remark}
    In \cite{MichaelStoll2001}, although Lemma 4.3 and Lemma 5.6 are specific for 
    polynomials defined over $\Q$, the same arguments also work in $\F_p$.
\end{remark}

\begin{corollary}
    When the 2-rank of $J(\F_p)$ equals $1$, knowing $\#J(\F_p)\bmod 4$ 
    allows us to eliminate all but $1$ \textit{initial candidates}.
\end{corollary}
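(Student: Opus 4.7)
The idea is to exploit that $\#J(\F_p) = L_p(1) = p^2 + (p+1)a_1 + a_2 + 1$ depends linearly on $a_2$ with slope $1$. Since every initial candidate $c$ satisfies $c \equiv a_2 \bmod p$, any two distinct candidates $c \neq c'$ differ by a nonzero multiple of $p$. Hence the list of $\#J(\F_p)$ values obtained by substituting the initial candidates into this linear expression forms an arithmetic progression with common difference $p$.

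The crucial observation is that $p > 2$ is odd, so $p$ is a unit modulo $4$, and the multiples $p, 2p, 3p$ occupy the three nonzero residue classes modulo $4$. In Case 1 of Lemma \ref{InitialCandidatesLemma} the initial candidates form an arithmetic progression of length at most $4$, so the corresponding $\#J(\F_p)$ values hit four distinct residues modulo $4$. By Proposition \ref{prop:rank1}, the hypothesis that the $2$-rank equals $1$ pins down $\#J(\F_p) \bmod 4$ exactly, namely $0$ if $\Res(g_1, g_2)$ is a nonzero square in $\F_p$ and $2$ otherwise. This residue matches exactly one of the four candidates, leaving a unique refined candidate.

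The main obstacle is Case 2 of Lemma \ref{InitialCandidatesLemma}, where $a_1 = 0$ and the five candidates $\{-2p, -p, 0, p, 2p\}$ produce $\#J$ values $(p-1)^2, p^2-p+1, p^2+1, p^2+p+1, (p+1)^2$, whose reductions modulo $4$ are $0, \varepsilon, 2, -\varepsilon, 0$ for some $\varepsilon \in \{1, -1\}$ depending on $p \bmod 4$. The outer two candidates $a_2 = \pm 2p$ collide at residue $0 \bmod 4$, so when Proposition \ref{prop:rank1} outputs $\#J \equiv 0 \bmod 4$ the $2$-rank information alone cannot disambiguate them. The natural strategy is to note that $a_1 = 0$ together with $a_2 = -2p$ would force $L_p(T) = (1-pT^2)^2$, so Frobenius on the $\ell$-adic Tate module has characteristic polynomial $(T^2 - p)^2$; analyzing the induced action on $J[2]$ should show this configuration is incompatible with the hypothesis that the $2$-rank equals $1$. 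Should this fine-grained argument prove delicate, the ambiguity can instead be resolved at a negligible additional cost by testing $L_{p,0,\pm 2p}(1) \cdot P = 0$ on a random $P \in J(\F_p)$, deferring the edge case to the mechanism of Step 3.
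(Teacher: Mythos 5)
Your Case 1 (at most four initial candidates) is exactly the paper's argument: the candidate values of $L_{p,a_1,c}(1)$ form an arithmetic progression mod $4$ with odd common difference $p$, so the known residue of $\#J(\F_p)\bmod 4$ singles out one candidate. The problem is Case 2, which is where the actual content of the corollary lies, and there your proposal has a genuine gap. You correctly isolate the collision between $c=\pm 2p$ (both giving $\#J\equiv 0\bmod 4$), but you only assert that "analyzing the induced action on $J[2]$ should show this configuration is incompatible with $2$-rank $1$" without proving it. As stated, this analysis does not follow from the characteristic polynomial alone: mod $2$ the characteristic polynomial $(T^2\mp p)^2$ reduces to $(T+1)^4$, and a unipotent matrix on $\F_2^4$ with a single Jordan block has a fixed space of dimension $1$, which is perfectly compatible with $2$-rank $1$. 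To make your sketch work you would need an extra input, e.g.\ semisimplicity of Frobenius (Tate), which forces the minimal polynomial to be $T^2\mp p$, hence $(\pi-1)^2=0$ on $J[2]$ and so $2$-rank $\geq 2$; the paper instead quotes Theorem~1.1 of \cite{ZHU2000292}, which gives the possible group structures of the resulting elementary supersingular abelian variety and shows all of them have $2$-rank at least $2$, contradicting the hypothesis. Either route must actually be carried out; your proposal carries out neither.

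Your fallback — "defer the edge case to Step 3 by testing a random point" — does not rescue the proof, because it proves a strictly weaker statement than the corollary claims. The corollary asserts that knowledge of $\#J(\F_p)\bmod 4$ \emph{alone} leaves a single candidate when the $2$-rank is $1$; this is what justifies skipping Jacobian arithmetic entirely in that case (and note the subsequent remark in the paper shows the analogous statement genuinely fails when the $2$-rank is $2$, so the dichotomy is not automatic). Replacing the missing argument by an appeal to the randomized elimination step changes the statement rather than proving it.
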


\begin{proof}
    If there are $4$ or fewer \textit{initial candidates}, then
    \textit{initial candidates} come from an arithmetic sequence
    with difference $p$ and length at most $4$. Knowing $\#J(\F_p)\bmod 4$ leaves us with only 
    one candidate.

    If there are $5$ \textit{initial candidates}, then the only 
    case when knowing $a_2\bmod 2p$ does not yield 
    a unique solution is when  $a_2\in\set{-2p,2p}$, see Lemma \ref{InitialCandidatesLemma}.
    We will show that this cannot happen.

    Combined with the fact that $a_1=0$, we have $L_p(T)=(pT^2+1)^2$ or $L_p(T)=(pT^2-1)^2$. The characteristic polynomial 
    of the Frobenius endomorphism is either $(T^2+p)^2$ or $(T^2-p)^2$. By Theorem 1.1 of \cite{ZHU2000292},
    $J(\F_p)$ is an elementary supersingular abelian variety whose group structure is  
    isomorphic to one of the following: 
    \[(\Z/g(1)\Z)^2, \  \Z/g(1)\Z\times\Z/\tfrac{g(1)}{2}\Z\times\Z/2\Z, \t{ or }(\Z/\tfrac{g(1)}{2}\Z\times\Z/2\Z)^2\]
    where $g(T)=(T^2+p)$ or $g(T)=(T^2-p)$.

    In all three cases, the 2-rank of $J(\F_p)$ is at least $2$. This 
    contradicts the hypothesis that the 2-rank is $1$.
\end{proof}
\begin{remark}

    One warning is that there are cases when knowing $\#J(\F_p)\bmod 4$ is not enough.
    When the 2-rank is $2$, and the two \textit{refined candidates} are $\set{-2p,2p}$,
    Jacobian arithmetic is still needed to rule out the wrong candidate. Our experiments 
    have found such cases.
\end{remark}

\begin{remark}
    We ran an experiment using the lifting algorithm (Algorithm \ref{alg:lift})
    on 4000 random curve-prime pairs with $p \le 2^{16}$. Roughly $19\%$ of the curves 
    required Step 3, while the rest terminated after Step 2.

    The observed 2-rank distribution was as follows: 42.02\% are of rank 0, 41.50\% are of rank 1, 14.57\% are of rank 2, 1.70\% are of rank 3, and 0.20\% are of rank 4.
    Out of the instances where the 2-rank is $1$, about $48\%$ of them 
    had more than one candidate yielding even Jacobian sizes, and 
    applying the optimization allows us to determine the correct one using the Jacobian size modulo $4$. Hence 
    we no longer need Step 3 for these instances, which is expensive due to the Jacobian arithmetic.

    In our experiment, Step 2 speeds up 
    the lifting process by approximately $2$--$4$ times overall.
\end{remark}

\section{Eliminate wrong candidates using Jacobian arithmetic} \label{Section-step3}

\subsection{The algorithm}\label{subsection-back-forth}

In this step, we generate a random point $P$ in $J(\F_p)$ and, 
for each candidate $c$ for $a_2$, check whether 
 $L_{p,a_1,c} (1) \cdot  P = 0$.
We only keep the ones that yield the identity. We alternate the above process between the Jacobian and its twist.
We use a randomized process in this step, 
although all the prior steps are deterministic.

\begin{example}
    Note that running the elimination process 
    on the twist is necessary in some cases:
    we found examples where $J(\F_p)\simeq \Z/p\Z\times\Z/p\Z$, $a_1=-2, a_2=2p+1$,
    and the wrong candidate equals $1$. In these cases, the wrong Jacobian size would be $p(p-2)$.
    However, for any point $P\in J(\F_p)$, we would have $p(p-2)\cdot P=0$.
    Hence it is not possible to distinguish the wrong candidate 
    from the right one by performing Jacobian operations on just $J(\F_p)$. We 
    also need to use $\tilde{J}(\F_p)$, but this is the only situation where 
    using $\tilde{J}(\F_p)$ is necessary 
    (see Proposition \ref{non-terminate-condition}).
    
    One example is the following:
    \[C\colon y^2 = x^6 + 202 x^5 + 77 x^4 + 152 x^3 + 153 x^2 + 34 x + 283, \ \ p=313.\]
   
    We run our algorithm on this input. \textit{RefinedCandidates}
    consists of $1$ and $627$, 
    yielding
    \[\begin{cases}
        L_{p,a_1,1}(1)&= 97343=313\cdot311=p\cdot(p-2)\\
        L_{p,a_1,627}(1)&= 97969=313^2=p^2\\
    \end{cases} \text{ and }
    \begin{cases}
        L_{p,a_1,1}(-1)&= 98599=43\cdot2293\\
        L_{p,a_1,627}(-1)&=99225= 3^4\cdot 5^2\cdot7^2\\
    \end{cases}
    \]  
    Since the group exponent of $J(\F_p)$, $313$, divides both 
    $L_{p,a_1,1}(1)$ and $L_{p,a_1,627}(1)$, regardless of which 
    random point $P\in J(\F_p)$ we pick, it will be annihilated 
    by both of them. Hence it is impossible to identify the correct candidate from 
    the Jacobian arithmetic on $J(\F_p)$ alone.
    We then pick a random point $P'\in \tilde{J}(\F_p)$. 
    Then $L_{p,a_1,1}(-1)$ does not annihilate
    $P'$, but $L_{p,a_1,627}(-1)$ does. Therefore, the correct answer is $627$.
    In Proposition \ref{non-terminate-condition} and Proposition \ref{prop-twist},
    we will show that whenever we cannot distinguish the correct candidate from
    Jacobian arithmetic in  $J(\F_p)$, we will always be able to do so in $\tilde{J}(\F_p)$.

\end{example}

\subsection{Proof of correctness and expected termination}
We will prove that this step is expected to terminate and return the correct $L$-polynomial.

The following proposition shows that,
unless $J(\F_p)\simeq \Z/p\Z\times\Z/p\Z$ and the wrong 
candidate for $a_2$ yields a Jacobian size of $p^2+2p$ or $p^2-2p$, there always 
exists a point $P\in J(\F_p) $ to 
guarantee the termination of the procedure described in Subsection \ref{subsection-back-forth}.

\begin{proposition} \label{non-terminate-condition}

    We assume $p>47$.

    If there are two candidates $c,c'$, in \textit{refined candidates}, assuming $c < c'$,
    such that all points $Q\in J(\F_p)$ yield  $L_{p,a_1,c}(1)\cdot Q = L_{p,a_1,c'}(1) \cdot Q= 0$,
    then it must be the case that 
    \[a_2=c,  \t{ and }\begin{cases}
        L_{p,a_1,c}(1)&=p^2\\
        L_{p,a_1,c'}(1)&=p^2+2p\\
    \end{cases}\]
    or 
    \[a_2=c',  \t{ and }\begin{cases}
        L_{p,a_1,c}(1)&=p^2-2p\\
        L_{p,a_1,c'}(1)&=p^2.\\
    \end{cases}\]
\end{proposition}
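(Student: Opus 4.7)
The plan is to translate the hypothesis into a divisibility constraint on the exponent $e$ of the finite abelian group $J(\F_p)$, and then use the Weil bounds together with the invariant-factor structure of $J(\F_p)$ to force $J(\F_p)\cong(\Z/p\Z)^2$. First I would set $N=\#J(\F_p)=L_p(1)$ and observe that the hypothesis is equivalent to $e$ dividing both $L_{p,a_1,c}(1)$ and $L_{p,a_1,c'}(1)$, and hence dividing $c'-c$. Since $c\equiv c'\equiv a_2\pmod p$, we can write $c'-c=kp$ with $1\leq k\leq 4$ by Lemma \ref{InitialCandidatesLemma}. The corollary to Proposition \ref{prop:rank1} has already collapsed the 2-rank $1$ case to a single refined candidate, so the two surviving candidates must both yield odd Jacobian size (2-rank $0$) or both yield Jacobian size divisible by $4$ (2-rank $\geq 2$); either way $kp$ is even, and in the latter case divisible by $4$, leaving $k\in\{2,4\}$.

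The heart of the argument is to show $e=p$, $N=p^2$, and $J(\F_p)\cong(\Z/p\Z)^2$. Since $J(\F_p)$ has at most $4$ invariant factors, $e\geq N^{1/4}\geq \sqrt{p}-1$ by the Weil lower bound $N\geq (\sqrt{p}-1)^4$. For $p>47$ this gives $e>4\geq k$, so if $\gcd(e,p)=1$ then $e\mid k$ forces $e\leq 4$, a contradiction; therefore $p\mid e$. Writing $e=p^u v$ with $\gcd(v,p)=1$, the condition $e\mid kp$ gives $p^{u-1}v\mid k\leq 4$, forcing $u=1$, and combined with $v\mid k$ this gives $v\in\{1,2,4\}$. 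I would then split by the $p$-rank of $J$ (the number of invariant factors divisible by $p$, at most $2$). The $p$-rank $0$ case contradicts $p\mid N$. The $p$-rank $1$ case restricts $d_1,d_2,d_3$ to divide $v\leq 4$, giving $N\leq pv^4\leq 256p$; enumerating the finitely many $(d_1,d_2,d_3,d_4)$ that are also compatible with the allowed 2-rank narrows this to $N\leq 16p$, which contradicts $(\sqrt{p}-1)^4\leq N$ for $p>47$. The $p$-rank $2$ case with $v>1$ forces $d_3,d_4\geq 2p$ and hence $N\geq 4p^2$, exceeding the Weil upper bound $(\sqrt{p}+1)^4$. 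Only $v=1$ survives, so $J(\F_p)\cong(\Z/p\Z)^2$ and $N=p^2$.

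Finally I would rule out $k=4$: by Lemma \ref{InitialCandidatesLemma}, having two refined candidates differing by $4p$ forces $a_1=0$ and $\{c,c'\}=\{-2p,2p\}$; but $L_p(1)=p^2$ with $a_1=0$ forces $a_2=-1\notin\{-2p,2p\}$, a contradiction. Hence $k=2$, and the two cases of the conclusion correspond precisely to whether $a_2=c$ (giving $L_{p,a_1,c}(1)=p^2$ and $L_{p,a_1,c'}(1)=p^2+2p$) or $a_2=c'$ (giving $L_{p,a_1,c}(1)=p^2-2p$ and $L_{p,a_1,c'}(1)=p^2$).

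The main obstacle I anticipate is the $p$-rank $1$ enumeration: the raw bound $N\leq 256 p$ is not by itself enough to contradict the Weil lower bound throughout the range $p>47$, so one must use the 2-rank compatibility to eliminate the subcases with large $v$ before Weil closes the argument; it is this interplay between Weil and $2$-adic constraints that produces the specific threshold $p>47$.
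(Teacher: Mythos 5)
Your proof is correct and follows essentially the same route as the paper: the exponent of $J(\F_p)$ divides both candidate orders and hence their difference $kp$, the invariant-factor decomposition (at most four factors) plus the Weil bounds force $J(\F_p)\cong(\Z/p\Z)^2$ with $\#J(\F_p)=p^2$, and the $c'-c=4p$ possibility dies by a congruence modulo $p$ (the paper splits on $c'-c\in\{2p,4p\}$ at the outset and then cases on $n_4\in\{p,2p\}$ rather than on the $p$-rank and the prime-to-$p$ part of the exponent, but the ingredients are identical). One harmless slip: in your $p$-rank $2$, $v>1$ case the claim $d_3\geq 2p$ is not justified (e.g.\ $d_3=p$, $d_4=2p$ is a legitimate invariant-factor chain), but $d_3\geq p$ already gives $N\geq 2p^2>(\sqrt{p}+1)^4$ for $p\geq 29$, which is exactly the inequality the paper uses, so the contradiction stands.
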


In other words, unless we are in the case described above, there always exists a point $P\in J(\F_p)$
that is not annihilated by the Jacobian size coming from a wrong candidate, allowing 
us to eliminate it. In 
Proposition~\ref{prop-expect-terminate}, we will show that unless we're in the above case, 
the probability of drawing such a point is at least $1/2$.

\begin{proof}

    The 2-rank information reveals $L_{p,a_1,a_2}(1)\bmod 2$ (though when the 2-rank is not $0$ or $1$,
    even more information is given), which implies we know $a_2$ mod $2p$.
    Using Lemma \ref{InitialCandidatesLemma}, either 
    \begin{enumerate}
        \item $c'=c+2p$ or 
        \item $c'=c+4p$ and there were exactly $5$ elements in  \textit{initial candidates}.
    \end{enumerate}

    We prove these cases separately.

    Write $J(\F_p) \simeq \Z/n_1\Z\times \Z/n_2\Z\times\Z/n_3\Z\times\Z/n_4\Z$
    where $n_1\mid n_2\mid n_3\mid n_4$. This is always possible, 
    since for every prime power $l^n$, the $l^n$-torsion subgroup 
    of an abelian surface has rank at most $4$.

    \begin{enumerate}
        \item  

        Suppose $c'=c+2p$.
        Since for all $Q\in J(\F_p)$, we have \[L_{p,a_1,c}(1) \cdot Q = (p^2+pa_1+c+a_1+1)\cdot Q=0\]
        and \[L_{p,a_1,c'}(1) \cdot Q= (p^2+pa_1+(c+2p)+a_1+1)\cdot Q=0\]
        we deduce that $n_4\mid 2p$. When $p\geq 23$, we compute that $L_{p,a_1,a_2}(1)>16$.
        Hence $n_4>2$ and we must have $n_4=p$ or $2p$. 

        \begin{enumerate}
            \item Suppose $n_4=p$. Then, $L_{p,a_1,a_2}(1)$ must be a power 
             of $p$. For $p\geq 23$, the lower Weil bound strictly exceeds $p$,
            and for $p\geq 5$, the upper Weil bound is strictly smaller than $p^3$.
            
            Therefore, we must have $L_{p,a_1,a_2}(1)=p^2$, and $n_1=n_2=1, n_3=n_4=p$. 
            
            We must land in one of the following cases:
            
            \[a_2=c,  \t{ and }\begin{cases}
                L_{p,a_1,c}(1)&=p^2\\
                L_{p,a_1,c'}(1)&=p^2+2p\\
            \end{cases}\]
            or 
            \[a_2=c',  \t{ and }\begin{cases}
                L_{p,a_1,c}(1)&=p^2-2p\\
                L_{p,a_1,c'}(1)&=p^2\\
            \end{cases}\]

            \item Suppose $n_4=2p$. 
            
            We first show that $p\nmid n_3$.
            If not, $L_{p,a_1,a_2}(1)\geq 2p^2$. However,
            whenever $p\geq29$, the upper Weil bound is always 
            less than $2p^2$, yielding a contradiction. Therefore, $p\nmid n_3$.\\
            This means each of $n_1,n_2,n_3$
            equals either $1$ or $2$. This yields $n_1n_2n_3n_4=L_{p,a_1,a_2}(1)\in\set{2p,4p,8p,16p}$, 
            but for $p\geq 47$, the lower Weil bound strictly exceeds $16p$.
            This yields a contradiction.
        \end{enumerate}

        \item 

        In this case, $c'=c+4p$  and $a_1=0$.
        From \[L_{p,a_1,c}(1)\cdot  Q = (p^2+pa_1+c+a_1+1)Q=(p^2+c+1)\cdot Q= 0\]
        and \[L_{p,a_1,c'}(1)\cdot Q= (p^2+c+4p+1)Q=0,\] we deduce that $(4p)\cdot Q = 0$ 
        for all $Q\in J(\F_p)$. Therefore $n_4\mid 4p$. When $p\geq37$, $L_{p,a_1,a_2}(1)>256$, 
        thus $n_4>4$. Hence, $n_4=p, 2p$ or $4p$. In any case,
        $p^2 + c+1 \equiv 0\bmod p$ and  $p^2 + c'+1 \equiv 0\bmod p$,
        hence $c\equiv c'\equiv -1\bmod p$, but this is a contradiction,
        as $c=-2p$ and $c'=2p$.
    \end{enumerate}
\end{proof}

\begin{proposition}\label{prop-expect-terminate}
    Suppose $J(\F_p)\not\simeq \Z/p\Z\times\Z/p\Z$, and 
    assume without loss of generality that $c$ denotes the wrong candidate and $c'$ denotes the correct candidate.

    If we draw a uniformly random point $P_1$ from $J(\F_p)$ and then draw a second 
    uniformly random point $P_2$, the probability that at least one of these two points 
    eliminates the wrong candidate (i.e., $L_{p,a_1,c}(1)\cdot P_1\neq 0$ or $L_{p,a_1,c}(1)\cdot P_2\neq 0$) is at least $1/2$.
    This implies that the expected number of iterations on $J(\F_p)$ before termination is at most $4$.
\end{proposition}

\begin{proof}
    Write $J(\F_p) \simeq \Z/n_1\Z\times \Z/n_2\Z\times\Z/n_3\Z\times\Z/n_4\Z$
    where $n_1\mid n_2\mid n_3\mid n_4$; then $n_4$ is the group exponent of $J(\F_p)$.
    We assume each point of $J(\F_p)$ is equally likely to be drawn. Let $P_1$ and $P_2$
    be two randomly drawn points.
    By Theorem 8.1 in \cite{Sut07}, the probability of $\lcm(\abs{P_1}, \abs{P_2})=n_4$ is at least $\frac{6}{\pi^2}>\frac12$.

    Suppose we are in the case where $\lcm(\abs{P_1}, \abs{P_2})=n_4$.
    We will show that we must be able to eliminate the wrong candidate $c$. To do this,
    suppose $c$ is not eliminated. This means that both $P_1$ and $P_2$ are annihilated by $L_{p,a_1,c}(1)$
    and $L_{p,a_1,c'}(1)$. Hence, we must have $\abs{P_i}\mid L_{p,a_1,c}(1)$ and $\abs{P_i}\mid L_{p,a_1,c'}(1)$ for $i=1,2$.
    Thus, both the candidate Jacobian sizes are divisible by $\lcm(\abs{P_1}, \abs{P_2}) = n_4$
    and will annihilate all points on $J(\F_p)$.
    
    Following the proof of 
    Proposition \ref{non-terminate-condition}, we must have $J(\F_p)\simeq \Z/p\Z\times\Z/p\Z$. This 
    contradicts our assumption. Therefore, with probability at least $\frac12$, we can eliminate 
    the wrong candidate.

    The expected number of points we need to test is at most $2\cdot \frac{1}{1-\frac12}=4$.
\end{proof}

\begin{remark}
    We note that $4$ is a very conservative upper bound on the expected number 
    of iterations. In practice, we rarely need more than one round.
\end{remark}

When $J(\F_p)\simeq \Z/p\Z\times\Z/p\Z$, it is possible 
that the elimination process using random points on $J(\F_p)$ does not terminate. However, 
Proposition \ref{prop-twist} shows that the twist must not be isomorphic to $\Z/p\Z \times \Z/p\Z$.

As a result, using Proposition \ref{non-terminate-condition} and Proposition \ref{prop-expect-terminate}, the 
process on $\tilde{J}(\F_p)$ is expected to terminate after at most $4$ iterations. In conclusion,
the algorithm that alternates between $J(\F_p)$ and $\tilde{J}(\F_p)$ is expected to terminate after 
at most $8$ iterations.

\begin{proposition}\label{prop-twist}
    It is not possible that $J(\F_p)\simeq \Z/p\Z \times \Z/p\Z $ and $\tilde{J}(\F_p)\simeq \Z/p\Z \times \Z/p\Z$.
\end{proposition}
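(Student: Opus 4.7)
My plan is to argue by contradiction: suppose both $J(\F_p)\simeq \Z/p\Z\times\Z/p\Z$ and $\tilde{J}(\F_p)\simeq \Z/p\Z\times\Z/p\Z$. The first step is to pin down the $L$-polynomial. From $\#J(\F_p)=L_p(1)=p^2$ and $\#\tilde{J}(\F_p)=L_p(-1)=p^2$, the system
\[(p+1)a_1+a_2=-1,\qquad -(p+1)a_1+a_2=-1\]
forces $a_1=0$ and $a_2=-1$, so $L_p(T)=p^2T^4-T^2+1$ and the characteristic polynomial of Frobenius on $J$ is $\chi(T)=T^4-T^2+p^2$.

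The rest of the plan is to show that $J(\F_p)[p]$ has only $p$ elements, which cannot occur if $J(\F_p)\simeq\Z/p\Z\times\Z/p\Z$. Reducing modulo $p$, $\chi(T)\equiv T^2(T-1)(T+1)\bmod p$, a product of pairwise coprime factors because $p$ is odd, and the Newton polygon has slopes $\{0,0,1,1\}$; hence $J$ is ordinary with $p$-rank $2$. By Hensel's lemma the unit-root factor lifts to $h(T)=(T-u)(T-v)\in\Z_p[T]$ with $u\equiv 1$ and $v\equiv -1\bmod p$. For ordinary $J$, the étale quotient of the $p$-divisible group accounts for all nonidentity $\bar{\F}_p$-points of $J[p^\infty]$, so $J[p](\bar{\F}_p)\simeq(\Z/p\Z)^2$, and Frobenius acts on it with characteristic polynomial $h(T)\bmod p=(T-1)(T+1)$.

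Because $\pm 1$ are distinct in $\F_p$, Frobenius is diagonalizable on $J[p](\bar{\F}_p)$ with a one-dimensional fixed subspace, giving $J(\F_p)[p]=J[p](\bar{\F}_p)^F\simeq\Z/p\Z$, of order $p$. On the other hand, the hypothesis $J(\F_p)\simeq\Z/p\Z\times\Z/p\Z$ forces every element to be $p$-torsion, so $J(\F_p)[p]=J(\F_p)$ has order $p^2$, contradicting the previous line and proving the proposition.

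The main obstacle will be the structural input in paragraph two: that for ordinary $J/\F_p$ the group $J[p](\bar{\F}_p)$ is $(\Z/p\Z)^g$ with Frobenius acting through the unit-root factor of $\chi$ reduced mod $p$. This is a standard consequence of the connected--étale decomposition $J[p^\infty]=J[p^\infty]^{\mathrm{et}}\oplus J[p^\infty]^{\mathrm{mult}}$ for ordinary abelian varieties; rather than reprove it I would cite Waterhouse's thesis or Katz's Serre--Tate article. Everything else in the argument is an elementary manipulation of $\chi$ and Hensel's lemma.
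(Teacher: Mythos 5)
Your argument is correct, but it takes a genuinely different route from the paper. You use both hypotheses only to pin down the $L$-polynomial ($a_1=0$, $a_2=-1$, so $\chi(T)=T^4-T^2+p^2$), and then derive the contradiction entirely on $J$ over $\F_p$: since $p\nmid a_2$ the variety is ordinary, the unit-root factor of $\chi$ lifts by Hensel to $(T-u)(T-v)$ with $u\equiv 1$, $v\equiv -1 \bmod p$, and the standard connected--\'etale theory for ordinary abelian varieties (Waterhouse, Katz) identifies $J[p](\bar\F_p)\simeq(\Z/p\Z)^2$ with Frobenius acting with characteristic polynomial $(T-1)(T+1)$; the fixed space is then one-dimensional, so $J(\F_p)[p]\simeq\Z/p\Z$, contradicting $J(\F_p)\simeq\Z/p\Z\times\Z/p\Z$. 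The paper instead avoids any $p$-adic or Dieudonn\'e-theoretic input: it only needs $a_1=0$, base-changes to $\F_{p^2}$, uses the fact that $J(\F_p)$ and $\tilde{J}(\F_p)$ sit inside $\Jac(C_{\F_{p^2}})(\F_{p^2})$ with trivial intersection (Lemma 2 of \cite{Sutherland_2009}, applicable since $a_1=0$ rules out common $2$-torsion), and then contradicts the elementary bound that the $p$-rank of the Jacobian of a genus $2$ curve in characteristic $p$ is at most $2$, since the two copies of $(\Z/p\Z)^2$ would force $p$-rank $4$. Your route buys sharper information (it even shows the $p$-Sylow of such a hypothetical $J(\F_p)$ would have a cyclic $p$-torsion subgroup, and in fact also works one twist at a time once $a_2$ is known), at the cost of invoking the unit-root/ordinarity machinery, whereas the paper's proof is more elementary and symmetric in $J$ and $\tilde{J}$, needing only the $p$-rank bound and the twist decomposition over $\F_{p^2}$. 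Just make sure the structural fact you cite is stated for the Frobenius action on $\varprojlim J[p^n](\bar\F_p)$ (arithmetic input), since that is exactly what identifies $J(\F_p)[p]$ as the kernel of $F-1$.
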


\begin{proof}
    Suppose towards contradiction that both are isomorphic to $\Z/p\Z \times \Z/p\Z$.

    Let $\phi_p$ be the Frobenius endomorphism on $J(\F_{p^2})$ induced by the Frobenius 
    automorphism $\alpha\mapsto \alpha^p$ on $\F_{p}$.
    Note that $J(\F_{p^2}) = \ker(\phi_p^2-1)$ and $J(\F_p) = \ker(\phi_p-1)$.

    After identifying $J(\F_p)$ and $\tilde{J}(\F_p)$ over $\F_{p^2}$,
    the $\F_p$-points of $\tilde{J}(\F_p)$ are exactly the
    points where $\phi_p$ acts by~$-1$. Therefore, we may 
    identify $\tilde{J}(\F_p)$ with $\ker(\phi_p+1)$ as a subvariety of $J(\F_{p^2})$.
    
    Since there is no $2$-torsion point in $J(\F_p)$,
    $J(\F_p)=\ker(\phi_p-1)$ and $\tilde{J}(\F_p)=\ker(\phi_p+1)$ intersect trivially
    inside $J(\F_{p^2})$.
    Thus, the $p$-rank of $J(\F_{p^2})$
    is at least the sum of the $p$-ranks of $J(\F_p)$ and $\tilde{J}(\F_p)$, which is $4$.

    Using the fact that the $p$-torsion of 
    the Jacobian of a genus $g$ hyperelliptic curve (defined 
    over a field of characteristic $p$) 
    cannot exceed $g$ (see Page 147 in \cite{Mum08} or tag 03RP 
    in \cite{stacks-project} for a proof), we conclude that the $p$-rank of $J(\F_{p^2})$
    cannot exceed $2$.
    This yields a contradiction and completes the proof.
\end{proof}

\section{Pseudocode and complexity analysis}
\label{Section-pseudocode}

\begin{algorithm}
    \caption{Lift-l-poly}\label{alg:lift}
    \begin{algorithmic}
    \State \textbf{Input:} a genus $2$ curve $C$ given by $y^2=f(x)$ where $f\in \F_p[x]$ is a separable polynomial of degree $5$ or $6$, and $a_1',a_2'\in\F_p$,
    such that $L_p(T) \equiv a_2'T^2+a_1'T+1 \pmod p$.
    We furthermore assume $p>64$.
    \State \textbf{Output:} the $L$-polynomial $p^2T^4+pa_1T^3+a_2T^2+a_1T+1\in\Z[T]$
    \State 

    \If{$\deg(f) = 6$ and $f$ has a linear factor}
        \State Change $f$ to a degree $5$ model
    \EndIf

    \State Define $a_1$ to be the unique element in $(-p/2,p/2)$ congruent to $a_1'\bmod p$
    \State Define the KS bounds $B_{\text{low}},B_{\text{high}}$ for $a_2$, where $B_{\text{low}},B_{\text{high}}\in \R$. See Equation \ref{eqn:ksbounds}
    \State Define $\mathit{InitialCandidates}$ $:=[c\in \Z, B_{\text{low}}\leq c\leq B_{\text{high}} \mid c\equiv a_2' \bmod p]$
    \State Compute $r = \mathit{Jacobian2Rank}(f)$. See Table \ref{2rank-chart}.
    \State Define $L_{p,a_1,c}(T) = p^2T^4+pa_1T^3+cT^2+a_1T+1$, for different choices of $c$ in $\mathit{InitialCandidates}$.

    \If{$r = 0$}
        \State Define $\mathit{RefinedCandidates} =[c\in \mathit{InitialCandidates} \mid L_{p,a_1,c}(1)\equiv1\bmod 2$]
    \ElsIf{$r = 1$}
        \State Compute the variable $\mathit{residueMod4}$. See Case 2 in Section \ref{subsection-2rank-algo}.
        \State Define $\mathit{RefinedCandidates} =[c\in \mathit{InitialCandidates} \mid L_{p,a_1,c}(1)\equiv \mathit{residueMod4} \bmod 4$]
    \Else 
        \State Comment: in this case, $r\geq 2$.
        \State Define $\mathit{RefinedCandidates} =[c\in \mathit{InitialCandidates} \mid L_{p,a_1,c}(1) \equiv 0\bmod 2^r$]
    \EndIf

    \State Define $\mathit{Iterator} = 0$
    \While{$\#\mathit{RefinedCandidates}>1$}: 
        \If{$\mathit{Iterator}\equiv 0\bmod 2$}
            \State Generate random point $P\in J(\F_p)$
            \State Replace $\mathit{RefinedCandidates}$ by those $c$ that yield $L_{p,a_1,c}(1)\cdot P = 0$.
        \Else 
            \State Generate random point $P\in \tilde{J}(\F_p)$ where $\tilde{J}$ is the Jacobian of the quadratic twist of $C$
            \State Replace $\mathit{RefinedCandidates}$ by those $c$ that yield $(p^2-a_1p+c-a_1+1)\cdot P = 0$.
        \EndIf
        \State Increment $\mathit{Iterator}$ by $1$
    \EndWhile
    \State Let $c$ be the one candidate left, define $a_2=c$
    \State \Return $L_{p,a_1,a_2}(T)$. 
    \end{algorithmic}
    \end{algorithm}

This algorithm is a Las Vegas algorithm, 
which always yields correct results when it terminates, and 
has finite expected running time. We compute the worst possible expected running time.
We compute bit complexity and we use $n=\log p$ to denote the number of bits needed 
to represent an element of $\F_p$.

We first analyze the time complexity for computing the KS bounds (see Equation \ref{eqn:ksbounds}), the 2-rank,
and the residue modulo $4$ when the 2-rank equals $1$.

The time complexity for calculating the KS bounds 
is $O(\mathsf{M}(n))$, which is asymptotically equivalent to $O(n\log n)$ by \cite{harvey2021integer}. 
The time complexity for computing the 2-rank is $O(n^2\log n)$ (or equivalently, $O(\log^2 p \log\log p)$), as seen in Section~\ref{Section-step2}.
The time complexity for computing the residue modulo $4$ when the 2-rank equals $1$
is dominated by the complexity to compute 
the resultant of two polynomials of degree at most $4$.
By the product formula for resultants, it can be computed 
after a constant number of ring operations,
 so the time complexity is also $O(\mathsf{M}(n))=O(n\log n)$.

Now that we have analyzed the time complexity of the smaller components, we can analyze the time complexity of
Algorithm \ref{alg:lift}. Note that there are at most $2$ elements in \textit{RefinedCandidates},
and we only run the loop when there are exactly $2$ elements.
For each iteration of the loop, we generate a random point and perform 
Jacobian scalar multiplication for each of the two candidates in \textit{RefinedCandidates}.

We generate random points $P=(u(x),v(x))$ on the Jacobian as semi-reduced divisors 
in Mumford representation (see Chapter 10 of \cite{10.5555/2230462} for details). We first generate a random monic polynomial $u(x)\in \F_p[x]$
of  degree $1$ or $2$,
and check whether $f(x)$ is a square modulo $u(x)$. When $u(x)$ is separable, we consider the étale algebra $\F_p[x]/u(x)\simeq F_1$ or $F_1\times F_2$,
where each $F_i$ is a finite field of characteristic $p$. $f(x)$ is a square in $\F_p[x]/u(x)$
if and only if its image in each of the $F_i$ is a square. When $u(x)$ is not separable, it is
of the form $(x-a)^2$ for some $a\in \F_p$. There are $1+\frac{(p-1)p}{2}$ possible squares 
modulo $(x-a)^2$.
If $f(x)$ is a square, we set $v(x)$ as one of the square roots at random, otherwise 
we generate $u(x)$ again and repeat. Therefore, the probability that $f(x)\in \F_p[x]/u(x)$
is a square is at least $\left(\frac12\right)^2\geq\frac{1}{4}$. Hence, the expected number of 
$u(x)$ we generate until $f(x)\bmod u(x)$ is a square is constant. Furthermore, since there are $p$
linear monic polynomials and $p^2$ degree~$2$ monic polynomials in $\F_p[x]$, and there 
are at most $2^2=4$ possible square roots for $f(x)\bmod u(x)$, the probability of 
generating each non-trivial point in $J(\F_p)$ (or $\tilde{J}(\F_p))$ is at least $\frac{1}{4(p+p^2)}$, 
allowing us to sample points on the Jacobian with a nearly uniform 
distribution (which suffices for our purposes).
Therefore, the expected time complexity for generating a random point on 
$J(\F_p)$ (or $\tilde{J}(\F_p)$) is bounded above by the expected time complexity 
to compute a square root in $\F_p[x]/u(x)$, which is $O(n\mathsf{M}(n))=O(n^2\log n)$
by the Cantor--Zassenhaus algorithm. 
The time complexity for 
computing scalar multiplication on the Jacobian using double and add 
is $O(n\mathsf{M}(n))=O(n^2\log n)$. Therefore, the expected time complexity for completing one iteration of the 
loop is $O(n^2\log n)$. The expected number of iterations is constant, as addressed in 
Section~\ref{Section-step3}.

Combining the above, we conclude that the  expected time complexity to lift $L_p(T)$
from $\F_p[x]$ to $\Z[T]$ is $O(n^2\log n)$ or $O(\log^2 p\log\log p)$ per prime. 

To compute $L_p(T)\bmod p$, it takes $O(\log^{4+o(1)}p)$ time on average. Therefore, 
the total time to compute $L_p(T)$ is still $O(\log^{4+o(1)}p)$ on average and the 
 time complexity for computing the $L$-polynomial up to a bound $p\leq B$ remains
 $O(B\log^{3+o(1)}B)$ time.

The space complexity for our algorithm is $O(\log p)$ since at any 
point throughout the computation, we store a constant number of polynomials of 
degree at most $6$ with $\F_p$ coefficients.

\section{Implementation and comparisons}\label{Section-implementation}

An implementation of Algorithm \ref{alg:lift} in MAGMA (\cite{MR1484478}, version 2.28-25)
 is available on \cite{Shi_2025}. Our experiments use an implementation of the algorithm described in \cite{Sutherland_2020, HS_2016}
 as a subroutine to compute the $L$-polynomial mod $p$. 

In Table \ref{Table-exp1} and Table \ref{Table-exp2}, we compare our timings with an implementation of the algorithm presented in \cite{ABCMT_2019},
which computes zeta functions of cyclic covers (a more general setting) and has a time complexity of $O(p^{1/2+o(1)})$.
It is mainly implemented in Sage \cite{sage} with a key recurrence step 
implemented in C++.
\begin{remark}
  The algorithm in \cite{ABCMT_2019} is also capable of computing only the $L$-polynomial mod $p$,
but the $O(p^{1/2+o(1)})$ algorithm in \cite{Sutherland_2020} for computing $L_p(T)\bmod p$ at a single prime is more efficient.
\end{remark}

Table \ref{Table-exp1} displays the average total time to compute the $L$-polynomial 
for a single curve at all good odd primes
up to $2^{n}$. In this experiment, we 
fix $10$ genus $2$ curves with coefficients in $\Z$. For each $n$, we do the following: for each of the $10$ curves, we measure the CPU time to perform all the $L$-polynomial computations for 
odd primes of good reduction up to $2^n$, 
remove the $2$ fastest entries and 
the $2$ slowest entries, and then average the total time among the $6$ remaining entries. For the experiment using our implementation in MAGMA, we parallelized the 
computation using ten cores. In particular, for computing $L_p(T)\bmod p$ for all primes of 
good reduction up to $2^n$, we used the average polynomial-time 
algorithm from \cite{Sutherland_2020}. For the experiment using the algorithm from \cite{ABCMT_2019}, in addition to 
using parallel processing, we estimated the total runtime by sampling every $k$th prime. The value $k$ 
is chosen so that a single instance (running all good odd primes up to $2^n$ for some $n$) takes at most $1$ hour to complete. 
Our algorithm achieves a significant speed improvement. When computing all 
primes up to 
$28$-bits, we noticed a speed improvement by a factor of $\BigFactor$.

Table \ref{Table-exp2} displays the average computation time to obtain the $L$-polynomial 
for a single curve over a single odd $n$-bit good prime $p$. To obtain this result for our algorithm in MAGMA,
we select $10$ random genus~$2$ curves with coefficients in $\Z$ and compute
the $L$-polynomial with respect to $10$ random $n$-bit primes of good reduction and take the average. 
For computing $L_p(T)\bmod p$ for a single prime $p$, we used the $O(p^{1/2+o(1)})$ algorithm from \cite{Sutherland_2020}.
We do the same for the implementation of the algorithm from \cite{ABCMT_2019}.
We then remove the fastest $20\%$ and the slowest $20\%$ and compute the average of the times that remain.
Compared to \cite{ABCMT_2019}, our work achieves a speedup factor of at least $\SmallFactor$, and 
this speedup persists when extending the experiment to $40$-bit primes.

\begin{table}[!ht]
    \centering
    $\begin{array}{|r|r|r|r|r|}
        \hline
        n & \text{Compute all }L_p(T)& \text{Lift all }L_p(T) & \text{Compute all }  L_p(T) & \text{Compute all }   \\
          &   \bmod \ p\text{ (\cite{Sutherland_2020})} & \bmod \ p\text{ (our work)}   &  \text{(\cite{Sutherland_2020}} + \text{our work)} & L_p(T)\text{ (\cite{ABCMT_2019})}\\ 
          & O(\log^{4+o(1)} p) \t{ per prime}& O(\log^{2+o(1)}p) \t{ per prime}& & O(p^{1/2+o(1)}) \t{ per prime} \\ \hline
          12 & 0.052 & 0.093 & 0.145&  86.5\phantom{00} \\
          13 & 0.063 & 0.163 & 0.227&  172\phantom{.000} \\
          14 & 0.130 & 0.307 & 0.437&  324\phantom{.000} \\
          15 & 0.375 & 0.545 & 0.92\phantom{0}&  639\phantom{.000} \\
          16 & 1.26\phantom{0} & 1.01\phantom{0} & 2.27\phantom{0}&  1360\phantom{.000} \\
          17 & 3.62\phantom{0} & 1.95\phantom{0} & 5.57\phantom{0} &  2960\phantom{.000} \\
          18 & 8.48\phantom{0} & 3.60\phantom{0} & 12.1\phantom{00}&  6900\phantom{.000} \\
          19 & 19.9\phantom{00} & 6.97\phantom{0} & 26.8\phantom{00}&  16100\phantom{.000} \\
          20 & 46.3\phantom{00} & 13.9\phantom{00} & 60.2\phantom{00}&  60600\phantom{.000} \\
          21 & 110\phantom{.000} & 26.4\phantom{00} & 136\phantom{.000}&  186000\phantom{.000} \\
          22 & 264\phantom{.000} & 52.9\phantom{00} & 316\phantom{.000}&  545000\phantom{.000} \\
          23 & 625\phantom{.000} & 106\phantom{.000} & 731\phantom{.000}&  1490000\phantom{.000} \\
          24 & 1450\phantom{.000} & 205\phantom{.000} & 1660\phantom{.000} &  4110000\phantom{.000} \\
          25 & 3390\phantom{.000} & 422\phantom{.000} & 3810\phantom{.000} &  11100000\phantom{.000} \\
          26 & 7550\phantom{.000} & 952\phantom{.000} & 8500\phantom{.000} &  30900000\phantom{.000} \\
          27 & 17100\phantom{.000} & 2320\phantom{.000} & 19400\phantom{.000} &  82900000\phantom{.000} \\
          28 & 40400\phantom{.000} & 5710\phantom{.000} & 46100\phantom{.000} &  232000000\phantom{.000} \\
          \hline
    \end{array}$
\caption{Average computation time (in 2.000 GHz AMD EPYC 7713 core-seconds) for running Algorithm
\ref{alg:lift}
and \cite{ABCMT_2019} to compute $L$-polynomials of a fixed curve
for all good odd primes up to $2^{n}$.
}\label{Table-exp1}
\end{table}

\begin{table}[!ht]
    \centering 
    $\begin{array}{|r|r|r|r|r|}
        \hline
        n & \text{Compute }L_p(T)  & \text{Lift } L_p(T)  \bmod \ p  &  \text{Compute } L_p(T) & \text{Compute } L_p(T)   \\
          & \bmod \ p \text{ (\cite{Sutherland_2020})} &\text{ (our work)}   &\text{(\cite{Sutherland_2020}} + \text{our work)}& \text{(\cite{ABCMT_2019})} \\
          & O(p^{1/2+o(1)})& O(\log^{2+o(1)}p) &  & O(p^{1/2+o(1)}) \\ \hline
          12 & 0.00780 & 0.000332 & 0.00813 & 0.0745 \\ 
          13 & 0.00780 & 0.000249 & 0.00805 & 0.0804 \\ 
          14 & 0.00768 & 0.000373 & 0.00805 & 0.0851 \\ 
          15 & 0.00979 & 0.000332 & 0.0101\phantom{0} & 0.0989 \\ 
          16 & 0.0116\phantom{0} & 0.000373 & 0.0120\phantom{0} & 0.116\phantom{0} \\ 
          17 & 0.0128\phantom{0} & 0.000373 & 0.0132\phantom{0} & 0.149\phantom{0} \\ 
          18 & 0.0161\phantom{0} & 0.000332 & 0.0164\phantom{0} & 0.179\phantom{0} \\ 
          19 & 0.0191\phantom{0} & 0.000415 & 0.0195\phantom{0} & 0.226\phantom{0} \\ 
          20 & 0.0212\phantom{0} & 0.000249 & 0.0214\phantom{0} & 0.574\phantom{0} \\ 
          21 & 0.0259\phantom{0} & 0.000166 & 0.0261\phantom{0} & 0.926\phantom{0} \\ 
          22 & 0.0346\phantom{0} & 0.000207 & 0.0348\phantom{0} & 1.27\phantom{00}  \\ 
          23 & 0.0544\phantom{0} & 0.000166 & 0.0546\phantom{0} & 1.84\phantom{00}  \\ 
          24 & 0.0811\phantom{0} & 0.000166 & 0.0812\phantom{0} & 2.47\phantom{00}  \\ 
          25 & 0.133\phantom{00} & 0.000290 & 0.134\phantom{00} & 3.62\phantom{00}  \\ 
          26 & 0.207\phantom{00} & 0.000498 & 0.208\phantom{00} & 5.38\phantom{00}  \\ 
          27 & 0.362\phantom{00} & 0.000415 & 0.362\phantom{00} & 7.46\phantom{00}  \\ 
          28 & 0.562\phantom{00} & 0.000456 & 0.562\phantom{00} & 10.5\phantom{000}  \\ 
        \hline 
    \end{array}$
    \caption{Average computation time (in 2.000 GHz AMD EPYC 7713 core-seconds) for running Algorithm \ref{alg:lift}
    and \cite{ABCMT_2019} to compute $L$-polynomial for a 
    random genus $2$ curve with a random prime, $p\in [2^n, 2^{n+1}]$.}\label{Table-exp2}
\end{table}

\newpage
    \printbibliography[
        heading=bibintoc
    ]

\end{document}